    \def\independenT#1#2{\mathrel{\setbox0\hbox{$#1#2$}%
    \copy0\kern-\wd0\mkern4mu\box0}}
\title{\vspace{.64cm}Delay Optimal Server Assignment to Symmetric Parallel Queues with Random Connectivities}  % Declares the document's title.
\author{Hassan Halabian, Ioannis Lambadaris, Chung-Horng Lung
\\
Department of Systems and Computer Engineering\\
Carleton University, 1125 Colonel By Drive, Ottawa, ON, K1S 5B6, Canada\\
Email: \{hassanh, ioannis, chlung\}@sce.carleton.ca 
}      % Declares the author's name.
\begin{document}             % End of preamble and beginning of text.
\maketitle  
\thispagestyle{empty}                % Produces the title.
%%%%%%%%%%%%%%%%%%%%%%%%%%%%%%%%%%%%%%%%%%%%%%%%%%%%%%%%%%%%%%%%%
\newtheorem{theor}{Theorem}
\newtheorem{lem}{Lemma}
\newtheorem{pro}{Proposition}
\newtheorem{defin}{Definition}
\newtheorem{conj}{Conjecture}
\newtheorem{cor}{Corollary}

%%%%%%%%%%%%%%%%%%%%%%%%%%%%%%%%%%%%%%%%%%%%%%%%%%%%%%%%%%%%%%%%%
\IEEEpeerreviewmaketitle
\begin{abstract}
In this paper, we investigate the problem of assignment of $K$ identical servers to a set of $N$ parallel queues in a time slotted queueing system. The connectivity of each queue to each server is randomly changing with time; each server can serve at most one queue and each queue can be served by at most one server per time slot. Such queueing systems were widely applied in modeling the scheduling (or resource allocation) problem in wireless networks.
It has been previously proven that Maximum Weighted Matching (MWM) is a throughput optimal server assignment policy for such queueing systems \cite{Tassiulas92,sarkar}. In this paper, we prove that for a symmetric system with i.i.d. Bernoulli packet arrivals and connectivities, MWM minimizes, in \textit{stochastic ordering} sense, a broad range of cost functions of the queue lengths including total queue occupancy (or equivalently average queueing delay).

\end{abstract}
\section{Introduction}
Optimal stochastic control of emerging wireless networks is one of the primary objectives in the design of such networks. In general, the main goal in the stochastic control of wireless networks is to distribute the shared resources in physical (e.g. power) and MAC layers (e.g. radio interfaces, relay stations and orthogonal channels) to multiple users such that a certain stochastic performance attribute is optimized.
While various performance attributes including the stable throughput region, power consumption and utility functions of the admitted rates have been studied in many papers, average queueing delay has been considered far less in literature.
This is due to the inherent difficulty of delay optimal scheduling problems in queueing systems with time varying channel conditions. 
In this paper, we consider a discrete time queueing system which is suitable in modeling of orthogonal resource assignment (e.g. radio interfaces/channel allocation) in multi-user wireless access networks. In our system, we model the available shared resources by a set of identical servers. The model also consists of a set of queues whose connectivities to each server is changing by time randomly. Therefore, the resource assignment problem is equivalent to finding a \textit{matching} between the queues and the servers at each time slot such that some performance objectives are optimized. 
%The main goal of this paper is to determine a matching at each time slot such that a broad range of cost functions of the queue lengths is minimized.
It has been already shown that Maximum Weighed Matching (MWM) is throughput optimal for such a system, i.e., it maximizes the stable throughput region of the system \cite{Tassiulas92,sarkar}. MWM has also been extensively used in literature for treating the scheduling problem in crossbar packet switches \cite{mckeown,Tassiulas98,neely2004,Leonardi}.
In this paper, we prove that for a symmetric system with i.i.d. Bernoulli arrivals and connectivities (i.e. with the same arrival and connectivity parameters for all the queues), MWM is also optimal in minimizing, in \textit{stochastic ordering} sense, a broad range of cost functions of queue lengths including total queue occupancy (or equivalently average queueing delay)\footnote{We order two discrete time random processes $A=\{A(t)\}_{t=1}^{\infty}$ and $B=\{B(t)\}_{t=1}^{\infty}$ stochastically as follows: We say $A$ is stochastically less than $B$ and we write $A \leq_{st} B$ if $\Pr (A(t)>r) \leq \Pr(B(t)>r)$ for all $t=1,2,...$ and all $r \in \mathbb{R}$. The notion and relevant properties will be discussed in more detail in Section \ref{sto}.}. In other words, we show that MWM policy minimizes stochastically a broad range of cost functions of queue length processes including the expected total queue occupancy across all possible server assignment policies.  

The problem of optimal server allocation in queueing systems with random connectivities was mainly addressed in \cite{Tassiulas92,sarkar,Tassiulas93,ganti,javidi,javidi4,javidi2, hussein, khodam}. 
In \cite{Tassiulas92}, the authors introduced the notion of stability region of a general queueing network with 
time varying connectivities and they proposed back-pressure algorithm as a throughput optimal resource allocation policy for queueing networks. In \cite{Tassiulas93}, they considered a multi-queue single-server queueing system with random connectivities. They characterized the stability region by a set of linear inequalities and also proved that for a symmetric system with the same arrival and connectivity parameters for all the queues, LCQ (Longest Connected Queue) provides the optimal performance in terms of average queue occupancy.

In \cite{javidi2}, Maximum Weight (MW) policy was proposed as a throughput optimal server allocation policy for multi-queue multi-server queueing systems with stationary channel processes. In \cite{khodam}, the authors characterized the network capacity region of multi-queue multi-server queueing systems with time varying connectivities. They also obtained an upper bound for the average queueing delay of AS/LCQ policy which is a throughput optimal server allocation policy for these systems. The results were further extended in \cite{ISIT-2011} for more general stationary channel distributions (and not just i.i.d. Bernoulli channels).

The authors in \cite{ganti} considered a queueing model with a set of symmetrical parallel queues competing for $K$ identical servers. The connectivity of each queue to all the servers is assumed to be the same at each time slot and during each time slot, each queue can attract at most one server. The authors proposed LCQ policy in which the servers are allocated to the $K$ longest connected queues at each time slot. They proved the optimality of LCQ policy by using dynamic coupling and stochastic ordering method.

The work in \cite{javidi,javidi3,javidi4,hussein} focuses on the optimal server allocation problem in multi-queue multi-server queueing systems in terms of average queueing delay. In \cite{javidi,javidi3,javidi4}, the authors
% argue that in general, achieving instantaneous throughput and load balancing is impossible in a general MQMS system. However, they showed that this goal is attainable in the special case with ON-OFF channel processes. They also 
introduced MTLB (Maximum-Throughput Load-Balancing) policy and showed that this policy minimizes a class of cost functions including total average delay for the case of two symmetric queues. The work in \cite{hussein} considers this problem for general number of symmetric queues and servers. In \cite{hussein}, a class of \textit{Most Balancing} (MB) policies was characterized among all work conserving policies which are minimizing, in stochastic ordering sense, a class of cost functions including total average delay. 
Note that in the model used in \cite{javidi,javidi3,javidi4,hussein,khodam}, there is no restriction on the number of servers that are serving a queue at each time slot.
In \cite{sarkar}, it was shown that for a multi-queue multi-server system in which queues are restricted to attract at most one server at each time slot, Maximum Weighted Matching (MWM) policy is throughput optimal. The authors also considered the effect of infrequent channel state measurements on the stability region. 
%In \cite{mckeown}, policy was also 

The rest of the paper is organized as follows. Section \ref{modeldes} describes the model and the notation required throughout the paper. In section \ref{background}, we introduce Maximum Weighted Matching (MWM) policy as the optimal policy for the described model. We will also review the concepts of stochastic ordering and dynamic coupling method which are the main mathematical tools used in proving the optimality of MWM policy. In section \ref{asli}, we present the main result of this paper, that is proving the optimality of MWM server assignment policy. Section \ref{conc} summarizes the conclusions of the paper.

%%%%%%%%%%%%%%%%%%%%%%%%%%%%%%%%%%%%%%%%%%%%%%%%%%%%%%%%%%%%%%%%%%%%%%%%%%%%%%%%%%%%%%%%%%%%%
%%%%%%%%%%%%%%%%%%%%%%%%%%%%%%%%%%%%%%%%%%%%%%%%%%%%%%%%%%%%%%%%%%%%%%%%%%%%%%%%%%%%%%%%%%%%%
%\section{Notations}
%Before we proceed to next section, we introduce basic notations often used through the paper. Other notations are introduced whenever it is necessary. Through this paper, all the vectors are considered to be row vectors. The time average of a function $f(t)$ is denoted by $ \overline{f(t)} $, i.e.,  $\overline{f(t)}= \lim_{t\rightarrow \infty}\frac{1}{t} \sum_{\tau=1}^{t} f(\tau)$. The operator $``\circledast"$ is used for entry-wise multiplication of two matrices. By $\underline{1}_K$, we denote a row vector of size $K$ whose elements are all identically equal to $1$. The Expectation of random processes (or random variables) is denoted by $E[\cdot]$. The cardinality of a set is shown by $|\cdot|$. The operator for inner product of two vectors is $\left\langle \cdot, \cdot \right\rangle$. The boundary of a set is represented by $bound (\cdot)$. For any vector $\alpha=(\alpha_1,\alpha_2,...,\alpha_N)$ and a non-empty index set $ \mathcal{U}=\{u_1,u_2,...,u_{|\mathcal{U}|}\} \subseteq \{1,2,...,N\} $ and $u_1<u_2<...< u_{|\mathcal{U}|}$, we define $ \alpha_{\mathcal{U}}=(\alpha_{u_1},\alpha_{u_2},...,\alpha_{u_{|\mathcal{U}|}}) $.

%%%%%%%%%%%%%%%%%%%%%%%%%%%%%%%%%%%%%%%%%%%%%%%%%%%%%%%%%%%%%%%%%%%%%%%%%%%%%%%%%%%%%%%%%%%%%%
%%%%%%%%%%%%%%%%%%%%%%%%%%%%%%%%%%%%%%%%%%%%%%%%%%%%%%%%%%%%%%%%%%%%%%%%%%%%%%%%%%%%%%%%%%%%%%

\section{Model Description}
\label{modeldes}
%
%\section {Model Description} \label{modeldes}
%\begin{figure}[tp]
%    \centering
%    \includegraphics[width=0.45\textwidth]{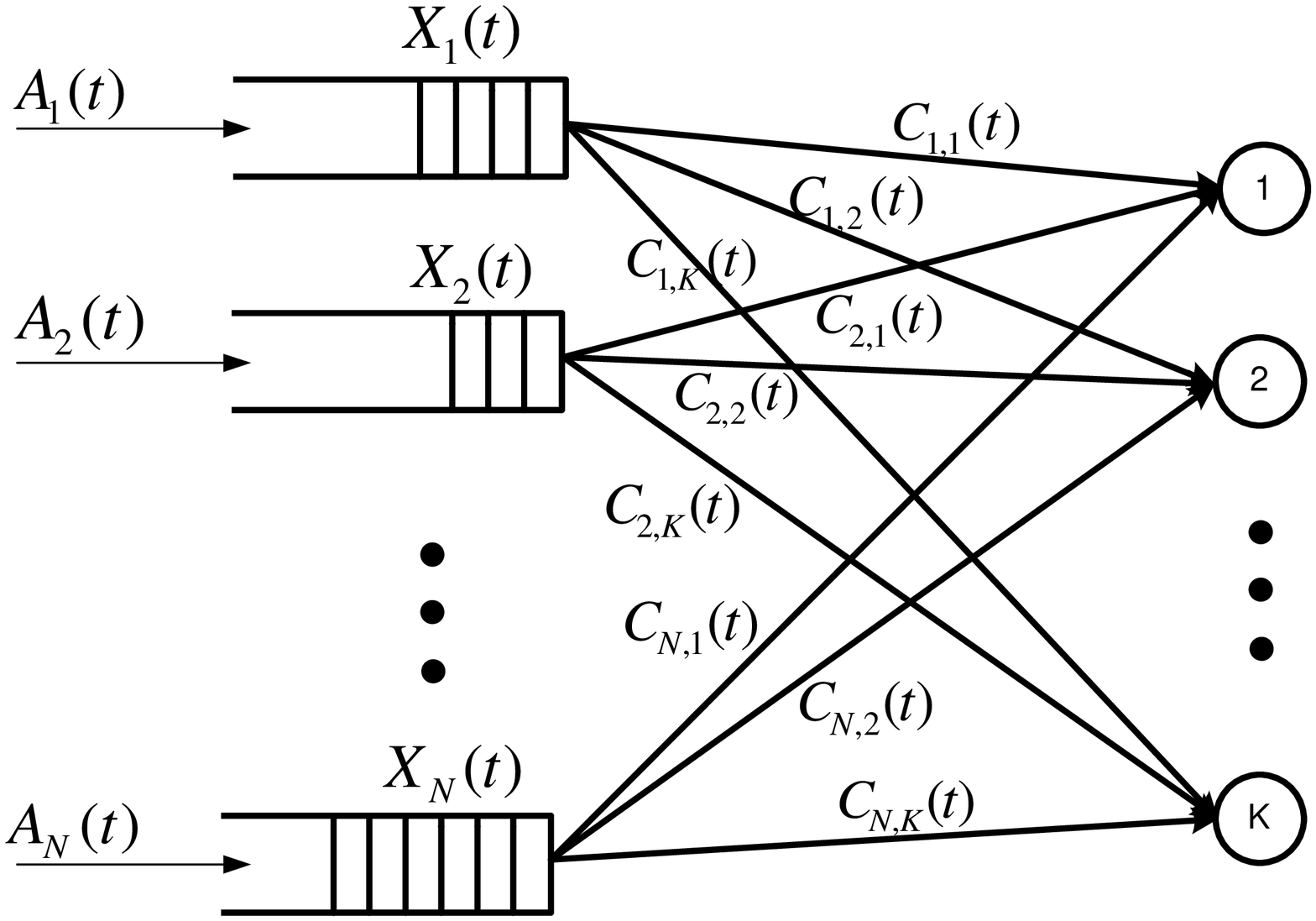}
%	\caption{ Multi-queue multi-server queueing system with stationary channel distribution}
%	\label{model}
%\end{figure} 
We consider a time slotted parallel queueing system with a set of parallel symmetrical
queues $\mathcal{N}=\{1,2,...,N\}$ and infinite buffer space for each queue. 
Packets in this system are assumed to have constant length and require one time slot to complete service. The service to this set of queues is provided through a set of identical servers namely $\mathcal{K}=\{1,2,...,K\}$. 
%(see Figure \ref{model}). 
The connectivity of each queue $n\in \cal N$ to each server $k \in \cal K$ at each time slot $t$ is random and follows a Bernoulli distribution. We denote the connectivity of queue $n$ to server $k$ at time slot $t$ by $C_{n,k}(t)$. Note that $C_{n,k}(t) \in \{0,1\}$ and $E[C_{n,k}(t)]=p $ for all $n \in \cal N$ and $k \in \cal K$ and $t=1,2,...$.
% We arrange all the connectivity random variables at each time $t$ in an $N\times K$ matrix as $C(t)=(C_{n,k}(t)),\forall n \in {\cal N}, \forall k \in \cal K$.

At each time slot, each server can serve at most one packet from a connected non-empty queue.
Note that in the system we do not have server sharing i.e., a server can serve at most one queue at each time slot. We also assume that a queue which is being serviced by a server at a given time slot, cannot get service from other servers during the same time slot.
\begin{figure}[tp]
    \centering
    \includegraphics[width=.4\textwidth]{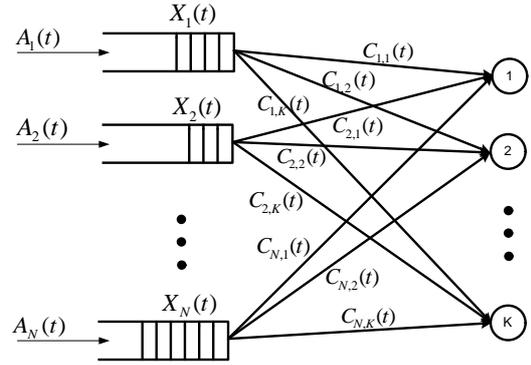}
    \caption{Discrete time queueing system with $N$ parallel queues and $K$ servers}
    \label{model}
\end{figure} 
%
%The amount of service offered by a given service vector $(s_{1,k_1},s_{2,k_2},...,s_{M,k_M})$ to each queue $n$ at time slot $t$ is determined by function $C_n(s_{1,k_1},s_{2,k_2},...,s_{M,k_M},t)$. In our model $C_n(s_{1,k_1},s_{2,k_2},...,s_{M,k_M},t)$ is modelled as a Bernoulli random process with parameter $p$ for all $n,k_1,k_2,...,k_M$,
%  % $1 \leq n \leq N,1 \leq k_1 \leq K_1, 1 \leq k_2 \leq K_2,...,1 \leq k_M \leq K_M$
%i.e., for any service vector $(s_{1,k_1},s_{2,k_2},...,s_{M,k_M})$ and any queue $n$, we have $C_n(s_{1,k_1},s_{2,k_2},...,s_{M,k_M},t) \in \{0,1\}~\forall t$ and $E[C_n(s_{1,k_1},s_{2,k_2},...,s_{M,k_M},t)]=p$. Henceforth, we call $C_n(s_{1,k_1},s_{2,k_2},...,s_{M,k_M},t)$ as the connectivity variable of user $n$ if it uses service vector $(s_{1,k_1},s_{2,k_2},...,s_{M,k_M})$. At each time slot $t$, the network controller decides about which service vector $(s_{1,k_1},s_{2,k_2},...,s_{M,k_M})$ should be allocated to each queue $n$. Note that in our system sever sharing is not allowed. In other words, if the service vector $(s_{1,k_1},s_{2,k_2},...,s_{M,k_M})$ and $(s_{1,k'_1},s_{2,k'_2},...,s_{M,k'_M})$ are allocated to queues $n$ and $n'$ (for any $n \neq n'$) respectively, then $k_m \neq k'_m$ for all $1 \leq m\leq M$. We call the service vectors with this property as \textit{non-overlapping} service vectors. We also assume that at most one series of servers $q \in \cal S$ can be assigned to each queue at each time slot.

Let $A_n(t)$ be the packet arrival process (number of packet arrivals) to queue $n$ at time slot $t$. We assume that new arrivals at each time slot are added to the queues at the end of the time slot. Assume that the arrival processes $A_n(t)$ at each time slot $t$ are independent Bernoulli random variables with the same parameter for all $n$ and $t$. We denote the length of queue $n$ at the end of time slot $t$ (i.e., after adding the new arrivals) by $X_n(t)$. In other words, $X_n(t)$ represents the number of packets in the $n$th queue at the end of time slot $t$ (or beginning of time slot $t+1$).

A server assignment policy at each time slot determines an assignment of servers of set $\cal K$ to the queues of set $\cal N$. In other words, at each time slot the scheduler has to decide about a \textit{bipartite matching} (matching in bipartite graphs) between sets $\cal N$ and $\cal K$.
This should be accomplished based on the available information about the connectivities $C_{n,k}(t)$ and also the queue length process at the beginning of time slot $t$ (which is $X(t-1)=(X_1(t-1),X_2(t-1),...,X_N(t-1))$).
%The assignment (or matching) of servers to the queues must be such that some stochastic performance attribute (e.g., throughput, delay, etc.) is optimized.
%Thus, a server assignment policy $\pi$ is defined as a rule that determines a feasible matching at each time slots $t$.
For a given policy $\pi$, suppose that indicator variable $I^{(\pi)}_{n,k}(t)$ is defined to be $``1"$ if server $k$ is assigned to queue $n$ at time slot $t$ and $``0"$ otherwise. We define $M^{(\pi)}(t)=\{I^{(\pi)}_{n,k}(t), \forall n \in \mathcal{N},k\in \mathcal{K}\}$ as the employed \textit{matching} by policy $\pi$ at time slot $t$.
Therefore, a server scheduling policy $\pi$ is defined as $\pi=\{M^{(\pi)}(t)\}_{t=1}^{\infty}$.

%
%\begin{eqnarray}
%\label{conditions}
%&\displaystyle\sum_{k_1=1}^{K_1} \sum_{k_2=1}^{K_2} \sum_{k_3=1}^{K_3} \cdots \sum_{k_M=1}^{K_M} I_n(k_1,k_2,...,k_M,t)\leq 1 ~~ (n=1,2..,N) \nonumber\\
%&\displaystyle\sum_{n=1}^{N} \sum_{k_2=1}^{K_2} \sum_{k_3=1}^{K_3} \cdots \sum_{k_M=1}^{K_M} I_n(k_1,k_2,...,k_M,t)\leq 1 ~~ (k_1=1,2..,K_1) \nonumber \\
%&\displaystyle\sum_{n=1}^{N} \sum_{k_1=1}^{K_1} \sum_{k_3=1}^{K_3} \cdots \sum_{k_M=1}^{K_M} I_n(k_1,k_2,...,k_M,t)\leq 1 ~~ (k_2=1,2..,K_2) \nonumber 
%\\ &\vdots \nonumber & \\
%&\displaystyle\sum_{n=1}^{N} \sum_{k_1=1}^{K_1} \sum_{k_2=1}^{K_2} \cdots \sum_{k_{M-1}=1}^{K_{M-1}} I_n(k_1,k_2,...,k_M,t)\leq 1 ~~ (k_M=1,2..,K_M)
%\end{eqnarray}
% 
%As we can see from (\ref{conditions}), the scheduling problem in the introduced model is equivalent to finding an appropriate Multi-index assignment in the graph of Figure \ref{graph}. In the special case where $M=1$, the problem is to determine a matching in a bipartite graph.
According to the above discussion, we can see that the queue length random variable $X_n(t)$, $\forall n \in \cal N$ evolves with time according to the following rule:
\begin{eqnarray} \label{evol}
X_n(t)= \left (X_n(t-1)-\displaystyle\sum_{k=1}^{K}C_{n,k}(t)I_{n,k}^{(\pi)}(t) \right)^+ + A_n(t) \nonumber
\end{eqnarray}
where $(\cdot)^+$ returns the term inside the brackets if it is non-negative and zero otherwise. Note that a server can be assigned to an empty queue however it cannot serve it since there is no packet to be served. That is why we have used operator $(\cdot)^+$ in (\ref{evol}).

As we discussed earlier, the queueing model introduced in this section is useful in modeling the resource assignment problem in various systems with shared resources. In wireless communication systems, communication resources such as communication sub-channels, relay stations, etc. are shared among users and therefore can be studied using our model (e.g. \cite{hassan-milcom2010,sarkar}). 
%Note that the time varying nature of wireless media is also captured in our model by the connectivity variables.
Bipartite Matching also has been extensively used in literature (e.g. \cite{mckeown,Tassiulas98,neely2004,Leonardi}) to model the scheduling problem in crossbar packet switching systems.
In this paper, random variables are represented by CAPITAL letters and lower case letters are used to represent sample values of the random variables.  
\section{Background} \label{background}

\subsection{Maximum Weighted Matching}     
% Multi-index assignment problems are studied as part of graph theory and was first introduced in \cite{}. In the special case were the $M=1$, an assignment is equivalent a matching in bipartite graphs \cite{}. In analogy to the matching problem in bipartite graphs, a $M+1$-index assignment problem is sometimes called a matching in $M+1$-partitive graphs.
In \cite{Tassiulas92,phd,power,Now,sarkar}, it was shown that Back-pressure algorithm maximizes the stable throughput region of a general data network. For the model introduced in section \ref{modeldes}, Back-pressure algorithm is equivalent to solving the following optimization problem at each time slot $t$ \cite{sarkar}.
\begin{eqnarray}\label{matching}
\texttt{Maximize}&~~~ \displaystyle\sum_{n=1}^{N} x_n(t-1)\displaystyle\sum_{k=1}^{K} I_{n,k}(t)c_{n,k}(t)~~~~~~~ \nonumber \\
\texttt{s.t.}&\displaystyle\sum_{k=1}^{K} I_{n,k}(t)\leq 1 ~~ (n=1,2..,N) \nonumber\\
&\displaystyle\sum_{n=1}^{N} I_{n,k}(t)\leq 1 ~~ (k=1,2..,K) 
\end{eqnarray} 
where $x_n(t-1)$ and $c_{n,k}(t)$ are the values of random variables $X_n(t-1)$ and $C_{n,k}(t)$ at time slots $t-1$ and $t$, respectively.
Note that finding the solutions of problem (\ref{matching}) is equivalent to finding a maximum weighted matching in the bipartite graph $G_t=(\mathcal{N}, \mathcal{K},\mathcal{E})$ (see Figure \ref{matching-graph}). In $G_t$, $\mathcal{N}$ and $\mathcal{K}$ are the two sets of vertices in each part of the graph and $\mathcal{E}=\{e_{n,k},\forall n \in \mathcal{N}, \forall k \in \mathcal{K}\}$ is the set of edges between these two parts. Note that the associated weight to each edge $e_{n,k}$ is $ x_n(t-1)c_{n,k}(t) $. A matching in graph $G_t$ is basically a sub-graph of $G_t$ in which no two edges share a common vertex. Note that any matching $M^{(\pi)}(t)$ at any time slot $t$ is corresponding to a sub-graph of $G_t$ namely $G_t^{(\pi)}=(\mathcal{N},\mathcal{K}, \mathcal{E}^{(\pi)})$ in which  $e_{n,k} \in \mathcal{E}^{(\pi)}$ if and only if $I^{(\pi)}_{n,k}(t)=1$.
%In general solving the $M+1$-index assignment problem of (\ref{assig}) is NP-hard. However for $M=1$, the assignment problem is equivalent to maximum weighted bipartite matching problem in graph of Figure (\ref{bi-graph}).
%Finding the Maximum matching in bipartite graphs is not NP-hard and has polynomial time solutions \cite{}. 
%Henceforth, we use the term ``Maximum Weighted Matching" (MWM) as the solution of problem (\ref{assig}) for all $M \geq 1$ (and not just for $M=1$).
Suppose that $M^{(\text{MWM})}(t)=\{I^{(\text{MWM})}_{n,k}(t), \forall n \in \mathcal{N},k\in \mathcal{K}\}$ be the matching whose indicator variables are the solution of the optimization problem (\ref{matching}). Thus, we define Maximum Weighted Matching (MWM) server assignment policy as $\text{MWM}=\{M^{(\text{MWM})}(t)\}_{t=1}^{\infty}$. 
\begin{figure}[tp]
    \centering
    \includegraphics[width=.35\textwidth]{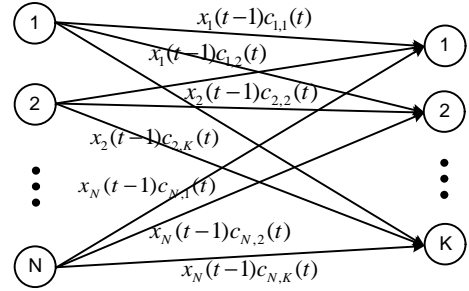}
    \caption{Bipartite graph corresponding to problem (\ref{matching})} 
    \label{matching-graph}
\end{figure}
There are several algorithms to find the maximum weighted matching in bipartite graphs. The most well known algorithm is Hungarian algorithm whose complexity is of $O((\min\{N,K\})(\max\{N,K\})^2)$ \cite{hungarian}.

As explained before, MWM is known to be throughput optimal for the queueing system described in section \ref{modeldes} \cite{sarkar}. Our contribution in this paper is to prove that MWM is also optimal in minimizing, in stochastic ordering sense, a class of cost functions of the queue length processes including the total system occupancy (or equivalently total average queueing delay) for the symmetric queueing system of Figure \ref{model} (which can be used to model a homogeneous wireless access network). We will introduce a detailed description of those class of cost functions in the following section. 

\subsection{Stochastic Ordering and Dynamic Coupling} \label{sto}
In this section, we briefly review the concepts of stochastic ordering (stochastic dominance) and dynamic coupling techniques. Consider two discrete time stochastic processes $A=\{A(t)\}_{t=1}^{\infty}$ and $B=\{B(t)\}_{t=1}^{\infty}$ in $\mathbb{R}$. We say $A$ is stochastically less than $B$ and we write $A \leq_{st} B$ if $\Pr (A(t)>r) \leq \Pr(B(t)>r)$ for all $t=1,2,...$ and all $r \in \mathbb{R}$ \cite{stoyan,ross}. Some properties of stochastic ordering are the following.
If $A\leq_{st} B$ then $f(A)\leq_{st} f(B)$ for all non-decreasing functions $f$. If $A\leq_{st} B$ then $E[A(t)] \leq E[B(t)]$.
$A$ is stochastically smaller than $B$ ($A \leq_{st} B$), if there exists process $\tilde{A}=\{\tilde{A}(t)\}_{t=1}^{\infty}$ defined on the same probability space as $B$ with the same probability distribution as $A$ and satisfy $\tilde{A}(t)\leq B(t)$ almost surely for every $t=1,2,...$ \cite{ganti}. The last statement is known as coupling of $A$ and $\tilde{A}$. In fact, when applying coupling technique, we are given the process $A$ and we try to construct a coupled process $\tilde{A}$ with the same distribution as $A$ and $\tilde{A}(t)\leq B(t)$ a.s. for all $t$. This gives us a tool for comparing processes $A$ and $B$ stochastically. This is specially useful when it is infeasible to derive the distributions of $A$ and $B$ (e.g. in our queueing model when comparing the total occupancy process for different server assignment policies).  

\section{Optimality of MWM}   \label{asli}  
In this section, we present the main result of this paper, that is proving the optimality of MWM with respect to minimization of a class of cost functions of queue lengths including the average queueing delay.
% We introduce the following definition first.
 Suppose that $\mathbb{Z}_+$ be the set of non-negative integers and $\mathbb{Z}_+^N$ be the $N$ dimensional Cartesian space of non-negative integers.
 %, i.e., $\mathbb{Z}_+^N=\{(x_1,x_2,...,x_N) \mid x_n \in \mathbb{Z_+},n=1,2,...,N \}$. 
We define relation $"\preceq``$ over $\mathbb{Z}_+^N$ as follows. 
 
\begin{defin}
For two vectors $x$ , $\tilde{x} \in \mathbb{Z}_+^N$, we write $\tilde{x} \preceq x$ if one of the following relations holds:
\begin{enumerate}[]
\item \textbf{D1}: $\tilde{x}_n \leq x_n$ for all $n=1,2,...,N$
\item \textbf{D2}: $\tilde{x}$ is obtained by permutation of two distinct elements of $x$, i.e., $\tilde{x}$ and $x$ are different in only two elements $n$ and $m$ such that $\tilde{x}_{n}=x_m$ and $\tilde{x}_{m}=x_n$.
\item \textbf{D3}: $\tilde{x}$ and $x$ are different in only two elements $n$ and $m$ such that $x_n <\tilde{x}_n\leq\tilde{x}_m< x_m$ and the following constraints are satisfied: $\tilde{x}_n=x_n+1$ and $\tilde{x}_m=x_m-1$.
\end{enumerate}
\end{defin}
 In \textbf{D3}, we say that $\tilde{x}$ is more balanced than $x$ and can be obtained by decreasing a larger element of $x$ (between $m$ and $n$) by ``1" and increasing a smaller element (between $m$ and $n$) by ``1". We call such an interchange a \textit{balancing interchange} on vector $x$. Thus, the result of a balancing interchange on a vector $x$ would be a vector $\tilde{x}$ such that $\tilde{x} \preceq x$. Suppose that vector $x \in \mathbb{Z}_+^N$ represents the queue length vector at a given time slot. Then, a balancing interchange is equivalent to taking a packet from a larger queue and adding it to a smaller queue.

We define the partial order $"\preceq_p``$ on $\mathbb{Z}_+^N$ as the transitive closure of relation $"\preceq``$ \cite{lidl}. In other words, $\tilde{x} \preceq_p x$ if and only if $\tilde{x}$ is obtained from $x$ by performing a sequence of 
reductions, permutations of two elements and/or balancing interchanges. When $x$ and $\tilde{x}$ are two queue length vectors, we write $\tilde{x} \preceq_p x$ if and only if queue length vector $\tilde{x}$ is obtained from $x$ by applying a series of packet removal, two queues permutations and balancing interchanges.

We define $\cal F$ as the class of real-valued functions on $\mathbb{Z}_+^N$ that are monotone and non-decreasing with respect to the partial order $"\preceq_p``$, i.e., 
\begin{eqnarray} \label{functions}
f\in \mathcal{F}~~\Longleftrightarrow~~\tilde{x}\preceq_p x \Rightarrow f(\tilde{x}) \leq f(x).
\end{eqnarray}
We can easily check that function $f(x)= \sum_{n=1}^N x_n $ belongs to $\cal F$. This function captures the total queue occupancy of the system.

Let $X'(t)=(X'_1(t),X'_2(t),...,X'_N(t))$ denote the queue length vector at time slot $t$ exactly after serving the queues according to a server assignment policy $\pi$ and before adding the new arrivals of time slot $t$, i.e.,
\begin{eqnarray}
X'_n(t)= \left (X_n(t-1)-\displaystyle\sum_{k=1}^{K}C_{n,k}(t)I_{n,k}^{(\pi)}(t) \right)^+.
\end{eqnarray}
Given $x'(t)$ as a sample value of random variable $X'(t)$, we define a \textit{balancing server reallocation} at time slot $t$ as follows: 

\begin{defin}\label{balancing}
A balancing server reallocation on vector $x'(t)$ is a matching that results in vector $\tilde{x}'(t)$ such that one of the following conditions is satisfied.
\begin{enumerate}[]
\item ($\textbf{C1}$): $\tilde{x}'_n(t) \leq x'_n(t)$ for all $n=1,2,...,N$ and there exists $m \in \{1,2,...,N\}$ such that $\tilde{x}'_m(t) < x'_m(t)$.
\item($\textbf{C2}$): $\tilde{x}'(t)$ and $x'(t)$ are different in only two elements $n$ and $m$ such that $x'_n(t) <\tilde{x}'_n(t)\leq \tilde{x}'_m(t)< x'_m(t)$ and the following constraints are satisfied: $\tilde{x}'_n(t)=x'_n(t)+1$ and $\tilde{x}'_m(t)=x'_m(t)-1$.
\end{enumerate}
\end{defin}
Figures \ref{examples:1} and \ref{examples:2} show two examples of balancing server reallocations in two sample graphs. In these figures, the original allocations are specified by solid lines while the balancing reallocations are specified by dashed lines.

\begin{figure}
  \centering
  \subfloat[Satisfying condition \textbf{C1}]{\label{examples:1}\includegraphics[width=0.22\textwidth]{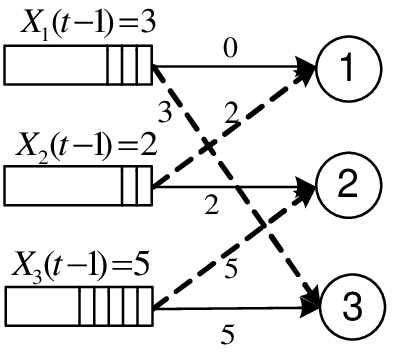}}              
  \subfloat[Satisfying condition \textbf{C2}]{\label{examples:2}\includegraphics[width=0.24\textwidth]{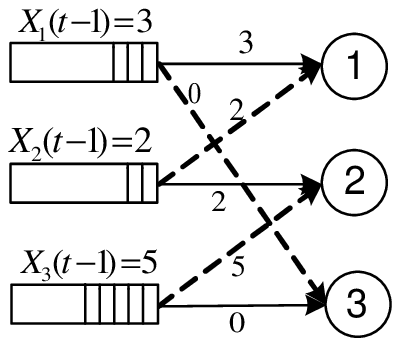}}
  \caption{Examples of balancing server reallocations}
  \label{examples}
\end{figure}
Consider an arbitrary server assignment policy $\pi$ with the allocation variables $\{I_{n,k}^{(\pi)}(t)\}_{t=1}^{\infty}$ for all $k\in \cal K$ and $n \in \cal N$. 
%Thus, we have
%\begin{eqnarray}
%x'_n(t)= \left (x(t-1)-\displaystyle\sum_{k=1}^{K}C_{n,k}(t)I_{n,k}^{(\pi)}(t) \right)^+
%\end{eqnarray}
We introduce Matching Weight ($\mathsf{MW}$) index associated to a server allocation policy $\pi$ at time slot $t$ by 
\begin{eqnarray}
\mathsf{MW}_{\pi}(t)=\displaystyle\sum_{n=1}^{N} x_n(t-1) \displaystyle\sum_{k=1}^{K}c_{n,k}(t) I_{n,k}^{(\pi)}(t) 
\end{eqnarray}
Note that $\mathsf{MW}$ index is exactly the objective of the optimization problem (\ref{matching}). 
According to Definition \ref{balancing} and definition of $\mathsf{MW}$ index, we can prove the following Lemma.
\begin{lem}\label{l1}
For a given policy $\pi$ employing matching $M^{(\pi)}(t)$ at time slot $t$, by applying a balancing server reallocation at time slot $t$ (if there exists any) we will have a new policy $\tilde{\pi}$ differing from $\pi$ only at time slot $t$ such that $\mathsf{MW}_{\pi}(t) < \mathsf{MW}_{\tilde{\pi}}(t)$.
\end{lem}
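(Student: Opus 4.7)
The plan is to express the MW difference directly as a weighted sum of per-queue service changes, and then dispatch the two cases of Definition \ref{balancing} by short arithmetic. The key preliminary observation is that under the matching constraints each queue receives at most one unit of service, so $s_n^{(\pi)}(t):=\sum_{k=1}^{K} c_{n,k}(t)I_{n,k}^{(\pi)}(t)\in\{0,1\}$, and therefore
\[
\mathsf{MW}_{\tilde{\pi}}(t)-\mathsf{MW}_{\pi}(t)=\sum_{n=1}^{N}x_n(t-1)\bigl(\tilde{s}_n(t)-s_n(t)\bigr).
\]
Only indices with $x_n(t-1)\geq 1$ can contribute to this sum, and for such indices the $(\cdot)^+$ in the evolution is inert, giving the clean identity $\tilde{s}_n-s_n=x'_n(t)-\tilde{x}'_n(t)$.

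For case \textbf{C1}, I would note that every non-empty queue satisfies $\tilde{x}'_n\leq x'_n$, hence $\tilde{s}_n-s_n\geq 0$, so all terms in the sum are non-negative. The distinguished index $m$ with $\tilde{x}'_m<x'_m$ must have $x_m(t-1)\geq 1$ (otherwise $x'_m=0$ and $\tilde{x}'_m$ could not be strictly smaller), so that term contributes at least $x_m(t-1)\geq 1$ to the difference. For case \textbf{C2}, only queues $n$ and $m$ are involved; the relations $\tilde{x}'_n=x'_n+1$ and $\tilde{x}'_m=x'_m-1$ force $s_n=1,\tilde{s}_n=0,s_m=0,\tilde{s}_m=1$, and the ordering $\tilde{x}'_n\leq\tilde{x}'_m$ translates to $x_n(t-1)\leq x_m(t-1)-1$. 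The MW difference then collapses to $x_m(t-1)-x_n(t-1)\geq 1$.

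The only subtlety, rather than a genuine obstacle, is the bookkeeping with empty queues: a reallocation can move a server onto or off a queue with $x_n(t-1)=0$ without affecting $x'_n$, so changes in $s_n$ at such queues are invisible in the $x'$ picture. These are exactly annihilated by the weight $x_n(t-1)=0$ in the MW sum, so they do not disturb either case analysis. Combining the two cases yields strict inequality $\mathsf{MW}_{\pi}(t)<\mathsf{MW}_{\tilde{\pi}}(t)$, as claimed.
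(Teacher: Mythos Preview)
Your argument is correct. The reduction to the per-queue service indicators $s_n,\tilde s_n\in\{0,1\}$ and the identity $\tilde s_n-s_n=x'_n(t)-\tilde x'_n(t)$ for non-empty queues is exactly the right bookkeeping, and your treatment of empty queues (where the $(\cdot)^+$ masks changes in $s_n$ but the weight $x_n(t-1)=0$ kills them in the MW sum anyway) closes the only loophole. Both cases \textbf{C1} and \textbf{C2} go through as you describe; in particular, in \textbf{C2} the chain $\tilde x'_n\leq \tilde x'_m$ with $\tilde s_n=0$, $\tilde s_m=1$ indeed yields $x_n(t-1)\leq x_m(t-1)-1$, giving the strict inequality.

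The paper itself omits the proof of this lemma, deferring it to the technical report \cite{TR1}, so a line-by-line comparison is not possible here. That said, your decomposition via $\mathsf{MW}_{\tilde\pi}(t)-\mathsf{MW}_{\pi}(t)=\sum_n x_n(t-1)(\tilde s_n-s_n)$ is the natural and essentially forced approach given the definitions, and it is reasonable to expect the referenced proof proceeds along the same lines.
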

The proof is omitted here due to space limitations. The detailed proof of the lemma is given in \cite{TR1}.
Based on Lemma \ref{l1}, we can state the following corollary.
\cor \label{cor1}
For a given policy $\pi$ at time slot $t$, if $\mathsf{MW}_{\pi}(t)$ is maximized, i.e., policy $\pi$ employs a maximum weighted matching at time slot $t$, then there exists no balancing server reallocation at that time slot.

Note that Lemma \ref{l1} just states that any balancing reallocation increases the matching weight index. However, it does not imply the existence of a balancing server reallocation when $\mathsf{MW}_{\pi}(t)$ is not maximized. In the following, we will prove the reverse of Lemma \ref{l1}.
\begin{lem}\label{l2}
For a given policy $\pi$ at time slot $t$, if $\mathsf{MW}_{\pi}(t)$ is not maximized, i.e., $\mathsf{MW}_{\pi}(t)<\mathsf{MW}_{\text{MWM}}(t)$, then there exists a balancing server reallocation at that time slot.
\end{lem}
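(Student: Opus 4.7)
My plan is to recast $\mathsf{MW}_\pi(t)$ as the weight of a matching in a reduced bipartite graph, then invoke the classical structure of the symmetric difference of two matchings. First, I would restrict to the ``useful'' subgraph $G_u=(\mathcal{N},\mathcal{K},\mathcal{E}_u)$ of $G_t$ obtained by keeping only edges $e_{n,k}$ with $c_{n,k}(t)=1$, and assign weight $x_n(t-1)$ to every $e_{n,k}\in\mathcal{E}_u$. For any policy $\rho$, set $M^{(\rho)}_u=M^{(\rho)}(t)\cap\mathcal{E}_u$; then $\mathsf{MW}_\rho(t)$ equals the total weight of $M^{(\rho)}_u$, and $\sum_k c_{n,k}(t)I^{(\rho)}_{n,k}(t)=1$ iff queue $n$ is matched in $M^{(\rho)}_u$, while adding or removing ``useless'' edges ($c_{n,k}=0$) changes neither quantity. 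The hypothesis therefore reads: $M^{(\pi)}_u$ is not of maximum weight in $G_u$.

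Fix any MWM matching $M^*$ and examine $D=M^{(\pi)}_u\triangle M^*_u$, which decomposes into vertex-disjoint alternating paths and cycles. Swapping the $M^{(\pi)}_u$-edges of a single component $P$ for its $M^*_u$-edges produces another valid matching, and the per-component weight changes sum to $\mathsf{MW}_{\text{MWM}}(t)-\mathsf{MW}_\pi(t)>0$, so at least one component $P^\star$ is strictly improving.

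The core step is to show that $P^\star$ necessarily has a shape matching condition \textbf{C1} or \textbf{C2} of Definition \ref{balancing}. If $P^\star$ were an alternating cycle, or an alternating path both of whose endpoints lay on the server side, every queue on $P^\star$ would have degree $2$ in $D$ and would therefore be matched by \emph{both} $M^{(\pi)}_u$ and $M^*_u$; the swap would merely permute which server serves each such queue, leave the served queue set invariant, and yield weight change zero, contradicting improvement. Hence $P^\star$ has at least one queue endpoint, leaving only two possibilities. (i) $P^\star$ is an odd-length augmenting path for $M^{(\pi)}_u$: its unique queue endpoint $n_0$ is unserved by $\pi$ but served by $M^*$, the swap adds $n_0$ to the served set while every other queue's service is unchanged, and positivity of the weight change forces $x_{n_0}(t-1)>0$; the resulting matching is a balancing reallocation of type \textbf{C1}. (ii) $P^\star$ is an even-length alternating path with both endpoints $v_0,v_L$ on the queue side: $v_0$ is unserved by $\pi$/served by $M^*$ and $v_L$ is served by $\pi$/unserved by $M^*$, so the swap serves $v_0$, unserves $v_L$, and leaves every other queue's service unchanged; the weight change $x_{v_0}(t-1)-x_{v_L}(t-1)$ being strictly positive translates, upon identifying $m=v_0$ and $n=v_L$ in Definition \ref{balancing}, to the inequality $x_{v_L}(t-1)<x_{v_0}(t-1)$ required by condition \textbf{C2}.

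In either case the matching $M^{(\pi)}_u\triangle P^\star$ is a balancing server reallocation of $\pi$ at time $t$, completing the proof. The main obstacle is the dichotomy argument in the previous paragraph: once one verifies that alternating cycles and server--server alternating paths cannot alter the served queue set (and hence cannot improve $\mathsf{MW}$), the only remaining shapes of an improving component are exactly the two described in \textbf{C1} and \textbf{C2}, and no further combinatorial machinery is needed.
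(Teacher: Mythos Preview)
Your alternating-path approach via the symmetric difference $D=M^{(\pi)}_u\triangle M^*_u$ is the natural route and is essentially correct. The paper itself defers the full argument to a technical report, so a line-by-line comparison is not possible, but the symmetric-difference decomposition you use is the standard mechanism for producing a single improving swap from a non-maximum matching, and your classification of components (cycles and server--server paths have zero weight change; only paths with a queue endpoint can improve) is the right dichotomy.

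There is one small gap in case~(ii). You assert that the queue--queue path yields a reallocation satisfying \textbf{C2}, but \textbf{C2} requires $\tilde{x}'_n(t)=x'_n(t)+1$ for the ``losing'' queue $n=v_L$. If $x_{v_L}(t-1)=0$, then $v_L$ was being served by $\pi$ from an empty queue, so $x'_{v_L}(t)=0$ and also $\tilde{x}'_{v_L}(t)=0$ after the swap; the two vectors then differ only at $v_0$, and the $+1$ condition at $v_L$ fails. The fix is immediate: in this sub-case the reallocation satisfies \textbf{C1} (strict decrease at $v_0$, equality elsewhere), so a balancing reallocation still exists. You should split case~(ii) into $x_{v_L}(t-1)>0$ (giving \textbf{C2}) and $x_{v_L}(t-1)=0$ (giving \textbf{C1}); with that amendment the proof is complete.
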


The proof is lengthy and is omitted here due to space limitations. For the detailed proof, please refer to \cite{TR1}.

By $\Pi^\text{MWM}$, we denote the set of all policies who employ maximum weighted matching at all time slots. We also define $\Pi_t$ as the set of all policies that employ maximum weighted matching exactly until time slot $t$ (including $t$). We can easily observe that $\Pi_t \subseteq \Pi_{t-1}$ and $ \Pi^\text{MWM}=\bigcap_{t=1}^{\infty}\Pi_t$.
From Lemmas \ref{l1} and \ref{l2} we conclude that given a policy $\pi \in \Pi_{t-1}$ which is using an arbitrary matching at time slot $t$, we can reach to a policy $\pi^{\star} \in \Pi_t$ by applying a sequence of balancing server reallocations. Suppose that $h_t^{\pi}$ represents the number of balancing server reallocations required to convert the employed matching in policy $\pi$ at time slot $t$ to a maximum weighted matching. In this case, we say that the distance of $\pi$ from $\Pi_t$ is $h_t^{\pi}$ balancing server reallocations. 
%For $h_t^{\pi}$ we have $0 \leq h_t^{\pi}\leq H=\max\{N,K\}$.
Note that if the distance of $\pi$ from $\Pi_t$ is $h_t^{\pi}$, after applying the first balancing server reallocation, we get to a policy $\tilde{\pi}$ whose distance from $\Pi_t$ is $h_t^{\pi}-1$ balancing server reallocations. By repeating this procedure we finally get to a policy whose distance to $ \Pi_t $ is zero, i.e., it belongs to $ \Pi_t $. By $\Pi_t^h$ ($0 \leq h \leq h_t^{\pi}$) we denote the set of all server assignment policies in $ \Pi_{t-1}$ whose distance from $\Pi_t$ is at most $h$ balancing sever reallocations. Note that $\Pi_t^0=\Pi_t$. 

Consider any two policies $ \pi$ and $\tilde{\pi}$ such that $f(\tilde{X}) \leq_{st} f(X)$, $f \in \cal F$ where $X=\{X(t)\}_{t=1}^{\infty}$ and $\tilde{X}=\{\tilde{X}(t)\}_{t=1}^{\infty}$ are the queue length processes when policies $\pi$ and $\tilde{\pi}$ are applied respectively. For such a system, we say policy $\tilde{\pi}$ \textit{dominates} $\pi$. Therefore, if $\tilde{\pi}$ dominates $\pi$ we have $ E[f(\tilde{X})] \leq E[f(X)] $. Given $f(x)= \sum_{n=1}^N x_n $, we conclude that the average queue occupancy (or equivalently average queueing delay) of policy $\tilde{\pi}$ is smaller than that of policy $\pi$.  
According to the above discussion, we can prove the following Lemma.

\begin{lem} \label{l3}
For any policy $ \pi \in \Pi_t^h$ and $0 < h \leq h_t^{\pi}$ we can construct a policy $\tilde{\pi} \in \Pi_t^{h-1}$ 
such that $\tilde{\pi}$ dominates $\pi$.
\end{lem}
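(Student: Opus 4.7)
The plan is to construct $\tilde{\pi}$ explicitly and then use a sample-path coupling argument to show that $\tilde{X}(\tau) \preceq_p X(\tau)$ almost surely for every $\tau$. Once this invariant is established, monotonicity of any $f\in\mathcal{F}$ along the coupled processes yields $f(\tilde X)\leq_{st} f(X)$, which is exactly the statement that $\tilde\pi$ dominates $\pi$.

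First I would define $\tilde\pi$ to agree with $\pi$ on every slot $\tau<t$; since $\pi\in\Pi_{t-1}$ and the two policies see the same arrivals and connectivities up to $t-1$, the two queue length processes coincide there. At slot $t$ the matching employed by $\pi$ is $h>0$ balancing reallocations away from a maximum weighted matching, so Lemma~\ref{l2} supplies at least one such reallocation; let $\tilde\pi$ use the matching produced after applying one of them, and keep $\tilde\pi$'s actions on later slots to be determined by the coupling below. By construction $\tilde\pi\in\Pi_t^{h-1}$, and Definition~\ref{balancing} gives $\tilde X'(t)\preceq X'(t)$: condition \textbf{C1} produces the coordinate-wise inequality \textbf{D1}, and condition \textbf{C2} produces the balancing interchange \textbf{D3}. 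Adding the common arrival vector $A(t)$ preserves either relation, so $\tilde X(t)\preceq X(t)$ holds pointwise.

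For $\tau>t$ I would propagate the invariant by induction on $\tau$, coupling $\pi$ and $\tilde\pi$ on a common probability space via the shared arrivals together with a permutation of the connectivity rows that is tailored to the current relation between the two queue vectors. Because the connectivities are i.i.d.\ Bernoulli with the same parameter $p$, any such row permutation preserves the joint distribution of the coupled process, which legitimizes the construction. The inductive step decomposes along the three elementary relations of Definition~1. In the \textbf{D1} case, $\tilde\pi$ copies $\pi$'s connectivities and matching; since the same queues are served by the same connected servers and the same arrivals are added, the $(\cdot)^+$ clipping only strengthens the inequality. In the \textbf{D2} case the queue vector of $\tilde\pi$ is a permutation of that of $\pi$, and applying the same permutation to the connectivity rows and to $\pi$'s matching reproduces the permutation one slot later. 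In the \textbf{D3} case, let $n,m$ be the two unbalanced queues and couple by swapping the rows $n$ and $m$ of the connectivity matrix for $\tilde\pi$ and swapping the servers that $\pi$ directs at $n$ and $m$; an exhaustive case split on the four service patterns $(s_n,s_m)\in\{0,1\}^2$ then shows that after clipping and after adding the common arrivals, $\tilde X(\tau)$ can be obtained from $X(\tau)$ by a single balancing interchange possibly followed by a permutation, hence $\tilde X(\tau)\preceq_p X(\tau)$. Because $\preceq_p$ is the transitive closure of $\preceq$, handling each elementary step of a chain witnessing $\tilde X(\tau-1)\preceq_p X(\tau-1)$ and composing the per-step couplings suffices.

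The main obstacle I expect is the \textbf{D3} case of the inductive step: after the row swap, the four service scenarios can open a two-unit gap between the two queues or flip their relative order, so certifying that the resulting vector still lies below $X(\tau)$ in $\preceq_p$ requires careful bookkeeping, especially to handle clipping at empty queues (the regimes $X_n=0$ or $X_m-1\in\{0,1\}$) and to recover the correct syntactic form after the common arrivals are added. Once that case analysis is carried out, the remaining ingredients---that $\tilde\pi$ belongs to $\Pi_t^{h-1}$, that the permuted-connectivity coupling has the correct marginal law, and that the pointwise relation $\tilde X(\tau)\preceq_p X(\tau)$ lifts through any $f\in\mathcal{F}$ to the stochastic ordering $f(\tilde X)\leq_{st} f(X)$---are immediate from the definitions already in place.
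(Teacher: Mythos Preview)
Your proposal follows essentially the same route as the paper's (outlined) proof: agree with $\pi$ before slot $t$, apply one balancing server reallocation at slot $t$ to land in $\Pi_t^{h-1}$, and then propagate $\tilde X(\tau)\preceq_p X(\tau)$ for $\tau>t$ by induction via a dynamic coupling of the channel and arrival variables, exploiting the i.i.d.\ symmetry to permute rows without changing the law. The paper defers the case analysis to its technical report, but the skeleton you describe---including the identification of the \textbf{D3} step as the delicate one---matches.

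One small point worth tightening: the paper couples \emph{both} $C(\tau)$ and $A(\tau)$ (it explicitly builds $\tilde c(\tau)$ and $\tilde a(\tau)$), whereas you keep the arrivals shared and permute only the connectivity rows. In the \textbf{D2} step this means the two post-arrival vectors are no longer literal permutations of one another when $a_n\neq a_m$; the relation $\preceq_p$ still survives, but only through an extra balancing interchange rather than a pure \textbf{D2} move, so your per-step chains can lengthen. Either permute the arrivals as well (legitimate for the same symmetry reason) or be prepared to absorb that growth when you compose couplings along the chain witnessing $\tilde X(\tau-1)\preceq_p X(\tau-1)$.
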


Here, we just give the outline of the proof. For the detailed proof please refer to \cite{TR1}.
The proof follows by applying dynamic coupling method over random variables $C(t)=(C_{n,k}(t)),\forall n \in \mathcal{N},\forall k \in \mathcal{K}$ and $A(t)=(A_1(t),A_2(t),...,A_N(t))$. In other words, we will show that given an arbitrary sample path $\omega=(x(0),c(1),a(1),x(1),c(2),a(2),x(2),c(3),a(3),x(3)...)$ we can construct policy $\tilde{\pi}$ and a new sample path $\tilde{\omega} = (\tilde{x}(0), \tilde{c}(1), \tilde{a}(1), \tilde{x}(1), \tilde{c}(2), \tilde{a}(2),\tilde{x}(2),\tilde{c}(3), \tilde{a}(3),\tilde{x}(3), ...)$ resulting in a new sequence of random variables $(\tilde{X}(0), \tilde{C}(1), \tilde{A}(1), \tilde{X}(1), \tilde{C}(2), \tilde{A}(2),\tilde{X}(2),\tilde{C}(3),\hspace{-2pt}...)$ with $X(0)=\tilde{X}(0)$ such that $\tilde{x}(t)  \preceq_{p} x(t)$ \textit{for all} $t$. In fact, we construct $\tilde{\omega}$ and $\tilde{\pi}\in \Pi_t^{h-1}$ in such a fashion that \textit{for all} the sample paths and all time slots we have $\tilde{x}(t)  \preceq_{p} x(t)$. The construction of $\tilde{\pi}$ is consisting of two main steps: construction for time slots before and including $t$ and construction for time slots after $t$. The construction before and including $t$ follows by using the matchings of policy $\pi$ for time slots before $t$. For time slot $t$, we apply the balancing server reallocation. The construction after $t$ follows by using mathematical induction. The detailed proof is lengthy and is omitted at this point. We refer the interested readers to \cite{TR1} for more detail. 

Based on Lemma \ref{l3}, we can prove the main result of this paper in the following Theorem.

\begin{theor}
Maximum Weighted Matching policy dominates any server assignment policy. 
\end{theor}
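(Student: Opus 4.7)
The plan is to iteratively transform an arbitrary server assignment policy $\pi$ into an MWM policy while preserving dominance at every step, and then pass to the limit. The workhorse is Lemma~\ref{l3}, which already carries out the hard coupling work of improving a policy by one balancing server reallocation at a single time slot.

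First I would build inductively a sequence of policies $\pi_0 := \pi,\ \pi_1,\ \pi_2,\ \ldots$, with each $\pi_t \in \Pi_t$ for $t\geq 1$ dominating its predecessor. Suppose $\pi_{t-1}$ has been constructed and (for $t\geq 2$) uses MWM at time slots $1,\ldots,t-1$. Its distance from $\Pi_t$ is some finite number $h := h_t^{\pi_{t-1}}$ of balancing server reallocations, by Lemmas~\ref{l1} and~\ref{l2}, so $\pi_{t-1} \in \Pi_t^{h}$. Applying Lemma~\ref{l3} exactly $h$ times yields a chain
\[
\pi_{t-1} = \pi_{t-1}^{(0)},\ \pi_{t-1}^{(1)},\ \ldots,\ \pi_{t-1}^{(h)} =: \pi_t \in \Pi_t,
\]
in which each policy dominates the previous one. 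Transitivity of $\leq_{st}$ then gives that $\pi_t$ dominates $\pi_{t-1}$, and, unrolling the induction, that $\pi_t$ dominates $\pi$ for every $t$.

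Next I would define the limit policy $\pi^{\star}$ by declaring its matching at time slot $t$ to be the matching used by $\pi_t$ at time $t$. Because each application of Lemma~\ref{l3} only modifies matchings at a single designated time slot (and, via the coupled construction, possibly at later time slots), but never at strictly earlier ones, the matching of $\pi_s$ at time slot $t$ agrees with that of $\pi_t$ for every $s \geq t$. Hence $\pi^{\star}$ is well defined and lies in $\Pi_t$ for every $t$, so $\pi^{\star} \in \bigcap_{t=1}^{\infty}\Pi_t = \Pi^{\text{MWM}}$. To transfer dominance to the limit, fix any $t$: since $\pi^{\star}$ and $\pi_t$ employ identical matchings on time slots $1,\ldots,t$ and are driven by the same arrival and connectivity processes, $X^{(\pi^{\star})}(t) = X^{(\pi_t)}(t)$ on the underlying sample space. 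Consequently, for every $f \in \mathcal{F}$ and every $r \in \mathbb{R}$,
\[
\Pr\!\bigl(f(X^{(\pi^{\star})}(t)) > r\bigr) = \Pr\!\bigl(f(X^{(\pi_t)}(t)) > r\bigr) \leq \Pr\!\bigl(f(X^{(\pi)}(t)) > r\bigr),
\]
which gives $f(X^{(\pi^{\star})}) \leq_{st} f(X^{(\pi)})$ for all $f \in \mathcal{F}$, i.e., $\pi^{\star}$ dominates $\pi$.

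The main obstacle is the limiting step: one must be certain that the successive applications of Lemma~\ref{l3} never disturb already-frozen MWM matchings at earlier time slots, so that $\pi^{\star}$ is a legitimate server assignment policy in $\Pi^{\text{MWM}}$ and so that the dominance of $\pi_t$ over $\pi$ transfers cleanly to $\pi^{\star}$ over $\pi$ for every fixed $t$. Once this bookkeeping is in place, all the genuinely difficult work, namely the sample-path coupling and the structural argument about balancing server reallocations, has already been absorbed into Lemma~\ref{l3}, and the theorem itself reduces to the induction above together with the transitivity of stochastic ordering.
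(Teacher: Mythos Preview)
Your proposal is correct and follows essentially the same approach as the paper: iteratively apply Lemma~\ref{l3} to push an arbitrary policy into $\Pi_t$ for successively larger $t$, obtain a dominating chain $\pi_0,\pi_1,\pi_2,\ldots$, and take the limiting MWM policy. If anything, your treatment of the limiting step---explaining why earlier matchings are never disturbed and why dominance transfers to $\pi^{\star}$ via equality of the time-$t$ queue-length distributions---is more explicit than the paper's, which simply asserts that the sequence ``defines a limiting policy $\pi^\ast$ that agrees with MWM at all time slots'' and dominates $\pi_0$.
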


\begin{proof}
Let $\pi_0$ be any arbitrary policy. Then $\pi_0 \in \Pi_0=\Pi_1^{H_1}$ where $H_1={h^{\pi_0}_1}$. By applying Lemma \ref{l3} repeatedly, we can construct a sequence of policies such that each policy dominates the previous one. Thus, we obtain policies that belong to $\Pi_0=\Pi_1^{H_1},\Pi_1^{{H_1}-1},\Pi_1^{{H_1}-2},...,\Pi_1^0=\Pi_1$. The last policy is called $\pi_1$. Note that $\pi_1 \in \Pi_2^{H_2}$ where $H_2=h^{\pi_1}_2$. By recursively continuing such argument we obtain a sequence of policies $\pi_t \in \Pi_t$, $t=1,2,...$ such that $\pi_j $ dominates $\pi_i$ for $j>i$.
Note that this sequence of policies defines a limiting policy $\pi^\ast$ that agrees
with MWM at all time slots. Thus, $\pi^\ast $ is an MWM policy who dominates all the
previous policies, including the starting policy $\pi_0$.
\end{proof}

%%%%%%%%%%%%%%%%%%%%%%%%%%%%%%%%%%%%%%%%%%%%%%%%%%%%%%%%%%%%%%%%%%%%%%%%%%%%%%%%%%%%%%%%%%%%%%%%
%%%%%%%%%%%%%%%%%%%%%%%%%%%%%%%%%%%%%%%%%%%%%%%%%%%%%%%%%%%%%%%%%%%%%%%%%%%%%%%%%%%%%%%%%%%%%%%%
\vspace*{1mm}
\section{Conclusions}\label{conc}
In this paper, we considered the problem of assignment of $K$ identical servers to a set of $N$ parallel queues in a symmetrical time slotted queueing system with random connectivities from the queues to the servers. For such a queueing system, it has been previously shown that MWM is throughput optimal, i.e. has the maximum stability region. Our contribution in this work is the development of a method to prove the optimality of MWM in minimizing, in stochastic ordering sense, a class of cost functions of queue lengths (including total queue occupancy or equivalently average queueing delay). Our method to achieve this goal used stochastic ordering and dynamic coupling techniques.
%
%Using stochastic ordering and dynamic coupling method we proved that for a symmetric system (i.e. with the same arrival and connectivity parameters for all the queues), MWM is also optimal in minimizing,  
\vspace*{1mm}
\IEEEtriggeratref{16}
\bibliographystyle{ieeetran}
%\vspace*{2mm}
\bibliography{Ref}

\end{document}